\theoremstyle{plain}
\newtheorem{thm}{Theorem}[section]
\newtheorem{lem}[thm]{Lemma}
\theoremstyle{definition}
\newtheorem{fact}[thm]{Fact}
\DeclareMathOperator{\id}{id}
\newcommand{\PP}{\mathbb{P}}     
\newcommand{\QQ}{\mathbb{Q}}
\newcommand{\on}{{\upharpoonright}}
\newcommand{\forces}{\Vdash}
\newcommand{\incomp}{\perp}
\newcommand{\seq}{\subseteq}
\DeclareMathOperator{\tr}{tr}
\newcommand{\setmin}{{\setminus}}
\DeclareMathOperator{\Borel}{\mathbf{Borel}}
\DeclareMathOperator{\add}{add}
\newcommand{\tle}[1]{2^{< #1}}
\DeclareMathOperator{\nnn}{\mathbf{null}}
\title{Higher random indestructibility of MAD families}
\author{Thomas Baumhauer\thanks{The author was supported by the Austrian Science Fund through grant FWF P29575.}}
\affil{TU Wien, Institute of Discrete Mathematics and Geometry\\
	Wiedner Hauptstra\ss{}e 8--10, 1040 Wien,
	Austria\\
	\texttt{\href{mailto:thomas.baumhauer@gmail.com}{thomas.baumhauer@gmail.com}}}
\date{}
\begin{document}
	
	\maketitle
	\thispagestyle{empty}
		
	\begin{abstract}
		We give a combinatorial characterization of when
		a maximal almost disjoint family of a weakly compact cardinal $\kappa$
		is indestructible by the higher random forcing $\QQ_\kappa$.
		We then use this characterisation
		to show that $\add(\nnn_\kappa) = \mathfrak b_\kappa = \mathfrak c_\kappa$ implies
		the existence $\QQ_\kappa$-indestructible family.
		The results and proofs presented here are parallel to those for classical random forcing.
	\end{abstract}
	
	\section{Introduction}
	
	In this paper $\kappa$ refers to a weakly compact cardinal.
	A family $\mathcal A \seq [\kappa]^\kappa$ is called {\em almost disjoint}
	if for all distinct $A, B \in \mathcal A$ we have $|A \cap B| < \kappa$.
	An almost disjoint family $\mathcal A$ is called {\em maximal} if for no
	almost disjoint family $\mathcal B \seq [\kappa]^{\kappa}$
	we have $\mathcal A \subsetneq \mathcal B$.
	%$X \in [\kappa]^\kappa$ there is $A \in \mathcal A$ such that
	%$|A \cap X| = \kappa$.
	
	The following way of constructing a maximal almost disjoint family $\mathcal A^*$ of $\kappa$ suggests itself. Identify $\kappa$ with $\tle \kappa$
	and for $\eta \in 2^\kappa$ let $A_\eta = \{\eta \on i : i < \kappa \} \seq \tle \kappa$.	
	Using the Teichm\"uller-Tukey lemma we can extend $\{A_\eta : \eta \in 2^\kappa \}$
	to a maximal almost disjoint family $\mathcal A^*$.
	
	Let $\PP$ be a forcing notion.
	We say that a maximal almost disjoint family $\mathcal A$ is {\em $\PP$-indestructible}
	if $\mathcal A$ remains maximal in any $\PP$-generic extension.
	It is easy to see that any forcing notion $\PP$ adding a real $\eta \in 2^\kappa$ destroys the family $\mathcal A^*$ from above, if $\PP$ satisfies Mostowski's absoluteness.\footnote{
		A forcing notion $\PP$ satisfies Mostowski's absoluteness if
		$\Sigma_1^1$ formulas are absolute between $V$ and $V^\PP$.
		Any $\kappa$-strategically closed forcing has this property,
		see e.g. \cite[Lemma 2.7]{FKK:2016} or \cite[Lemma 4.2.1]{Bh:2019}}
	
	This leads to the question: Given a forcing notion $\PP$, does there exist a maximal almost disjoint family $\mathcal A$
	such that $\mathcal A$ is $\PP$-indestructible?
	For the classical case $\kappa = \omega$ \cite{K:1980} shows that assuming CH there exists 
	a Cohen-indestructible maximal almost disjoint family.
	\cite{H:2001} and \cite{K:2001} provide a combinatorial characterization of Cohen-indestructibility
	and \cite{H:2001} also investigates Sacks and Miller forcing.
	\cite{BY:2005} continue this line of research, investigating several classical
	forcing notions and in particular provide a combinatorial characterization of indestructibility for the classical random forcing.
	
	We shall deal with $\QQ_\kappa$-indestructibility, where
	$\QQ_\kappa$ is the higher random forcing from \cite{Sh:1004}.
	In Theorem \ref{b1} we give a combinatorial characterization of $\QQ_\kappa$-indestructibility,
	parallel to the one in \cite[Theorem 2.4.9.]{BY:2005} for the classical random forcing. In Theorem~\ref{b2}
	we use this characterization to show that 
	\begin{equation}
	\add(\nnn_\kappa) = \mathfrak b_\kappa = \mathfrak c_\kappa
	\label{eqn:1}
	\tag{$*$}
	\end{equation}
	implies
	the existence of a $\QQ_\kappa$-indestructible maximal almost disjoint family of $\kappa$.
	Here $\nnn_\kappa$ denotes the higher null ideal from \cite{Sh:1004}
	(there referred to as $\id(\QQ_\kappa))$ and $\mathfrak c_\kappa$ denotes
	the size of $2^\kappa$.
	This result is again parallel
	to \cite[Theorem 3.6.1.]{BY:2005} where it is shown that $\add(\nnn) = \mathfrak c$ implies
	the existence of a random indestructible maximal almost disjoint family.
	
	Clearly $\mathfrak c_\kappa = \kappa^+$ implies (\ref{eqn:1}).
	However this assumption is not necessary, as the Amoeba model in \cite[Section~6]{BhGoSh:1144} shows (assuming $\kappa$ supercompact)
	that $$\kappa^+ < \add(\nnn_\kappa) = \mathfrak b_\kappa = \mathfrak c_\kappa$$
	is consistent.
	Compared to the classical case we need the additional assumption $\mathfrak b_\kappa = \mathfrak c_\kappa$, as the consistency of $\add(\nnn_\kappa) > \mathfrak b_\kappa$ is
	an open problem.
	
	\section{Notation and Conventions}	
	We use the following conventions.			
	If $f: X \to Y$ is a function, $A \seq X$ and $B \seq Y$, then
	$f[A] = \{f(x) : x \in A \}$ and $f^{-1}[B] = \{x \in X :
	f(x) \in B	\}$.	
	For $\rho \in \tle \kappa$ let
	$[\rho] = \{
	\eta \in 2^\kappa : \rho \trianglelefteq \eta
	\}.$	
	For $\rho, \varrho \in \tle \kappa$ we write $\rho \trianglelefteq \varrho$
	if $\rho \seq \varrho$.
	For $\rho \in p \in \QQ_\kappa$ let $p^{[\rho]} = \{
	\varrho \in p : \varrho \trianglelefteq \rho \lor \rho \trianglelefteq \varrho
	\} \in \QQ_\kappa$.

	For $A \seq \tle \kappa$ define the $G_\delta$-closure
	$$
	[A] = \{
	\eta \in 2^\kappa: \text{there exist cofinally many } i < \kappa \text{ such that }
	\eta \on i \in A
	\} \seq 2^\kappa.
	$$
	(Note that if $A$ is downward closed (i.e. a tree), $[A]$ is a closed set.)
	
	On $2^\kappa$
	we use the topology generated by the basic clopen sets $[\rho]$ for
	$\rho \in \tle \kappa$.
	The $\kappa$-Borel sets $\Borel_\kappa$ are the smallest family containing
	all basic clopen sets which is closed under complements and unions/intersections
	of at most $\kappa$-many sets.
	\section{Higher Random Forcing}
	
	The {\em higher random forcing} $\QQ_\kappa$
	for a (strongly) inaccessible cardinal $\kappa$ was introduced by Saharon Shelah
	in \cite{Sh:1004}.
	Recall that  $\QQ_\kappa$ is a tree forcing on
	$\tle \kappa$ with the following properties:
	
	\begin{enumerate}[$\quad$(a)]
		\item
		$\QQ_\kappa$ satisfies the $\kappa^+$-chain condition.
		\item
		$\QQ_\kappa$ is strategically $\kappa$-closed.
		\item
		If $\kappa$ is weakly compact, then $\QQ_\kappa$ is $\kappa^\kappa$-bounding.
	\end{enumerate}
	
	The {\em higher null ideal} $\nnn_\kappa$ consists of all sets
	$A \seq 2^\kappa$ such that there exists a family
	$\Lambda$ of $\kappa$-many maximal antichains of $\QQ_\kappa$
	such that
	$$
	2^\kappa \setmin A\ \supseteq\ 
	\bigcap_{\mathcal J \in \Lambda }\bigcup_{p \in \mathcal J}[p].
	$$
	
	If $G$ is a $\QQ_\kappa$-generic filter then we call
	$\eta = \bigcup_{p \in G} \tr(p)$ the $\QQ_\kappa$-{\em generic real} or {\em random real},
	where $\tr(p)$ is the trunk of $p$. Throughout the paper $\dot \eta$
	will denote a name for the canonical generic real added by
	$\QQ_\kappa$.
	
	\begin{fact}
		\label{a0}
		Let $p, q \in \QQ_\kappa$. The following are equivalent:
		\begin{enumerate}[~~~~(i)]
			\item 
			$p, q$ are compatible.
			\item 
			$[p] \cap [q] \neq \emptyset$.
			\item 
			$\tr(p) \trianglelefteq \tr(q) \in p \ \lor \ 
			\tr(q) \trianglelefteq \tr(p) \in q$. \qed
		\end{enumerate}
	\end{fact}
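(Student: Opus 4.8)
The plan is to prove the cycle of implications $(i)\Rightarrow(ii)\Rightarrow(iii)\Rightarrow(i)$; the first two are routine unwindings of the definitions and the third carries all the weight.

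For $(i)\Rightarrow(ii)$, let $r\in\QQ_\kappa$ be a common extension of $p$ and $q$, so that $r\seq p$ and $r\seq q$ as subtrees of $\tle\kappa$ (recall that for a tree forcing a stronger condition is a smaller tree). Then $[r]\seq[p]\cap[q]$ directly from the definition of the $G_\delta$-closure, and $[r]\neq\emptyset$ because every condition of $\QQ_\kappa$ is a nonempty, ${<}\kappa$-closed tree without terminal nodes, so a branch through $r$ is built by the usual recursion (extend at successor steps, take unions at limit steps). Hence $[p]\cap[q]\neq\emptyset$. For $(ii)\Rightarrow(iii)$, fix $\eta\in[p]\cap[q]$; by definition of $[\cdot]$ there are cofinally many $i<\kappa$ with $\eta\on i\in p$, and since $p$ is downward closed this forces $\eta\on i\in p$ for \emph{every} $i<\kappa$, and likewise for $q$. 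As $\tr(p)$ is $\trianglelefteq$-comparable with every node of $p$, taking any $i\geq\lh(\tr(p))$ shows $\tr(p)=\eta\on\lh(\tr(p))\trianglelefteq\eta$, and similarly $\tr(q)\trianglelefteq\eta$; so $\tr(p)$ and $\tr(q)$ are initial segments of the same branch and hence comparable, and assuming without loss of generality $\tr(p)\trianglelefteq\tr(q)$ we get $\tr(q)=\eta\on\lh(\tr(q))\in p$, which is $(iii)$.

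For $(iii)\Rightarrow(i)$, assume without loss of generality $\tr(p)\trianglelefteq\tr(q)\in p$. Since $\tr(q)\in p$, the restriction $p^{[\tr(q)]}$ is a condition whose trunk extends $\tr(q)$, and — as $\tr(q)$ is the trunk of $q$, so that every node of $q$ is comparable with $\tr(q)$ — the tree $p^{[\tr(q)]}\cap q$ coincides with $p\cap q$, a subtree of both $p$ and $q$ that contains $\tr(q)$. The remaining and only substantial point is to exhibit an actual member $r\in\QQ_\kappa$ with $r\seq p^{[\tr(q)]}$ and $r\seq q$: the natural candidate is $r=p\cap q$ itself, but this requires knowing that intersecting two conditions above a common trunk preserves the splitting/positivity requirement defining a $\QQ_\kappa$-condition — in particular that the intersection still has nonempty body, which is exactly where hypothesis $(iii)$ is needed and where the easy tree-forcing bookkeeping of the first two implications gives no information. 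This is the point at which the detailed definition of $\QQ_\kappa$ from \cite{Sh:1004} is genuinely used, and it is the main obstacle; I expect it either to be immediate once the definition is spelled out, or else to follow from a short fusion argument amalgamating $p^{[\tr(q)]}$ and $q$ level by level above $\tr(q)$, using the positivity of each condition both to keep the construction going and to certify that the fused tree is again a $\QQ_\kappa$-condition.
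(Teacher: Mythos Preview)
The paper records this as a Fact and gives no proof (the \qed{} sits inside the statement), so there is nothing in the paper to compare your argument against; the equivalence is being quoted as a basic structural property of $\QQ_\kappa$ from \cite{Sh:1004}.

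Your implications $(i)\Rightarrow(ii)$ and $(ii)\Rightarrow(iii)$ are correct and complete. For $(iii)\Rightarrow(i)$ you correctly isolate the only substantive point---that $p\cap q$ (equivalently $p^{[\tr(q)]}\cap q$) is again a condition---but you do not settle it: you defer to an unspecified reading of the definition or to a ``short fusion argument''. The fusion alternative is the wrong picture; fusion manufactures lower bounds for \emph{decreasing} sequences, not common refinements of two incomparable conditions, and a level-by-level amalgamation only succeeds if the per-node largeness requirement is itself closed under pairwise intersection. It is worth stressing that $(iii)\Rightarrow(i)$ outright \emph{fails} for classical random forcing: the trees of $\{\eta\in 2^\omega:\eta(1)=0\}$ and of $\{\eta\in 2^\omega:\eta(1)=1\}$ both have empty trunk, each trunk lies in the other tree, yet the two conditions are disjoint and hence incompatible. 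So this direction is not generic tree-forcing bookkeeping; it genuinely uses a feature of Shelah's definition that has no classical analogue. What makes it go through for $\QQ_\kappa$ is precisely that the splitting demand at the relevant levels is filter-like rather than merely ``positive'', so that once $\tr(q)\in p$ the intersection $p\cap q$ inherits the required splitting from both sides and is again a condition. That is what you need to extract from \cite{Sh:1004}; once isolated it is indeed immediate, so your first guess was the right one and the fusion fallback should be discarded.
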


	\begin{lem}
		\label{x1}
		Let $p, q \in \QQ_\kappa$. Then:
		\begin{enumerate}[~~~~(i)]
			\item
			If	$q \forces \dot \eta \in [p]$, then  $q \not \incomp p.$
			\item
			If $[q] \seq [p]$, then $q \leq p$
		\end{enumerate}
	\end{lem}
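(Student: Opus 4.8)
The plan is to treat the two items separately; in both cases everything reduces to the characterisation of compatibility in Fact~\ref{a0}, together with the observation that the relevant instance of that characterisation is absolute between $V$ and a $\QQ_\kappa$-generic extension.

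For (i) I would argue by contradiction, assuming $q \forces \dot\eta \in [p]$ while $q \incomp p$. The auxiliary fact I need is that $q \forces \dot\eta \in [q]$, i.e.\ $q$ forces the generic real to be a cofinal branch through $q$: for each $i < \kappa$ the conditions below $q$ whose trunk has length $\geq i$ are dense (given $q' \leq q$, pick $\varrho \in q'$ of length $\geq i$, which exists since $[q'] \neq \emptyset$ by Fact~\ref{a0}, and pass to $(q')^{[\varrho]}$), and the trunk of any condition below $q$ lies in the downward closed tree $q$. Next, by Fact~\ref{a0} the incompatibility $q \incomp p$ is equivalent to the failure of the quantifier-free statement ``$\tr(p) \trianglelefteq \tr(q) \in p$ or $\tr(q) \trianglelefteq \tr(p) \in q$''; this failure is absolute, hence persists in any generic extension, and the implication $(ii)\Rightarrow(iii)$ of Fact~\ref{a0} — whose proof uses nothing about $\QQ_\kappa$, one simply reads off from a common cofinal branch of $p$ and $q$ that their trunks are comparable and that the longer lies in the other tree — gives $[p] \cap [q] = \emptyset$ in that extension too. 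Fixing a generic $G$ with $q \in G$, we then have $\dot\eta^G \in [q]$ and $\dot\eta^G \notin [p]$, contradicting $q \forces \dot\eta \in [p]$.

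For (ii), assuming $[q] \seq [p]$, I would show directly that $q \seq p$ as subtrees of $\tle\kappa$, which is exactly $q \leq p$. Fix $\varrho \in q$; then $q^{[\varrho]} \in \QQ_\kappa$, and, being compatible with itself, it satisfies $[q^{[\varrho]}] \neq \emptyset$ by Fact~\ref{a0}, so we may choose $\eta \in [q^{[\varrho]}]$. The length-$|\varrho|$ initial segment of $\eta$ lies in $q^{[\varrho]}$ and therefore equals $\varrho$, so $\varrho \trianglelefteq \eta$; moreover $\eta \in [q^{[\varrho]}] \seq [q] \seq [p]$, and since $p$ is a tree, $\eta \in [p]$ means that every initial segment of $\eta$ lies in $p$, in particular $\varrho = \eta \on |\varrho| \in p$. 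As $\varrho \in q$ was arbitrary, $q \seq p$.

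The density computation and the bookkeeping with lengths are routine. The one place where the structure of $\QQ_\kappa$ genuinely enters is the assertion that every node of a condition lies on a cofinal branch — used explicitly in (ii) and implicitly in the density argument for (i) — but this is furnished for free by $q^{[\varrho]} \in \QQ_\kappa$ together with Fact~\ref{a0}. The step I would be most careful to write out is the absoluteness argument in (i): the desired conclusion concerns the $G_\delta$-closures of $p$ and $q$ as \emph{reinterpreted} in the extension, and it goes through precisely because incompatibility of these two trees is witnessed by an absolute property of their trunks, so that no new common cofinal branch can appear.
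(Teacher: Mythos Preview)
Your proof is correct and follows essentially the same route as the paper. For (i) both argue by contradiction from the failure of the trunk condition in Fact~\ref{a0}(iii) --- the paper treats the three possible failures case by case (e.g.\ in case $\tr(p)\trianglelefteq\tr(q)\notin p$ no branch of $p$ extends $\tr(q)$, while $q\forces\tr(q)\trianglelefteq\dot\eta$), whereas you package all three cases uniformly via $q\forces\dot\eta\in[q]$ together with absoluteness of the trunk condition --- and for (ii) the paper merely writes ``Similarly'', so your direct verification that $q\subseteq p$ as trees just fills in what the paper omits.
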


	\begin{proof}
		\ 
		\begin{enumerate}[(i)]
		\item
		Let $q \forces$ ``$\dot \eta \in [p]$'' and towards contradiction assume $q \incomp p$.
		According to Fact~\ref{a0} (iii) there are three cases:
		\begin{enumerate}[~~~~(1)]
			\item 
			$\tr(p) \incomp \tr(q)$
			\item 
			$\tr(p) \trianglelefteq \tr(q) \not \in p$
			\item 
			$\tr(q) \trianglelefteq \tr(p) \not \in q$.
		\end{enumerate}
		As an example consider case (2). For every $\nu \in [p]$ we have $\tr(q) \not \trianglelefteq \nu$. But clearly $q \forces $``$\tr(q) \trianglelefteq \dot \eta$''. Contradiction to $q \forces$ ``$\dot \eta \in [p]$''.
		
	    Work similarly for case (1) and (3).
	    \item
	    Similarly.\qedhere
	    \end{enumerate}
	\end{proof}
	
	\begin{fact}
		\label{a1}
		Let $B \in \Borel_\kappa$. Then:
		\[
		\pushQED{\qed} 
		B \in \nnn_\kappa \quad\Leftrightarrow\quad \forces_{\QQ_\kappa}
		\dot \eta \not \in B.\qedhere
		\popQED
		\]
		This is shown in \cite[Claim 3.2]{Sh:1004} by induction on the Borel rank of $B$.
	\end{fact}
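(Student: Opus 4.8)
\emph{Plan of proof.} The two implications are of completely different character. The direction from $B\in\nnn_\kappa$ to $\forces\dot\eta\notin B$ needs no induction. If $\Lambda$ is a family of at most $\kappa$ maximal antichains witnessing $B\in\nnn_\kappa$ and $G$ is generic, then $G$ meets every $\mathcal J\in\Lambda$, and for any $q\in G$ we have $\dot\eta^{G}\in[q]$: for each $j<\kappa$ the conditions $r\le q$ with $\lh(\tr(r))>j$ are dense below $q$ — pass to $q^{[\varrho]}$ for a node $\varrho\in q$ of length $>j$, which lies in $\QQ_\kappa$ with trunk extending $\varrho$ — and such an $r\in G$ gives $\dot\eta^{G}\on\lh(\tr(r))=\tr(r)\in q$. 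Hence $\dot\eta^{G}\in\bigcap_{\mathcal J\in\Lambda}\bigcup_{p\in\mathcal J}[p]\seq 2^\kappa\setmin B$ in every generic extension. The same computation records the two standing facts used below: $q\forces\dot\eta\in[q]$ for every $q\in\QQ_\kappa$, and $\nnn_\kappa$ is a proper, $\kappa^+$-complete ideal (closed under subsets by inspection; closed under unions of at most $\kappa$-many members, since the union of the corresponding witnessing families is again one of size $\le\kappa$, using the $\kappa^+$-c.c.\ to bound antichains; and proper by the implication just established, applied to $2^\kappa$).

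For the converse I would prove, by induction on the $\kappa$-Borel rank of $B$ — taking a code in normal form, with complements pushed down to the leaves — the localized equivalence
\[
q\forces\dot\eta\notin B\ifff[q]\cap B\in\nnn_\kappa\qquad\text{for all }q\in\QQ_\kappa\text{ and }B\in\Borel_\kappa .
\]
Specializing to $q=\tle\kappa$, the weakest condition, for which $[q]=2^\kappa$, this gives the Fact. The ``$\Leftarrow$'' half is free and uses no induction: if $[q]\cap B\in\nnn_\kappa$ then $\forces\dot\eta\notin[q]\cap B$ by the first part, and together with $q\forces\dot\eta\in[q]$ this gives $q\forces\dot\eta\notin B$.

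For ``$\Rightarrow$'' I would induct on the rank. At a leaf — a basic clopen set $[\rho]$ or its complement — Fact~\ref{a0} settles matters: if $q\forces\dot\eta\notin[\rho]$ then $[q]\cap[\rho]=\emptyset$, since otherwise $q$ is compatible with the full tree through $\rho$ and any common extension would force $\dot\eta\in[\rho]$; and if $q\forces\dot\eta\in[\rho]$ then $[q]\seq[\rho]$, since otherwise a branch $\nu\in[q]$ with $\nu\notin[\rho]$ gives a condition $q^{[\nu\on i]}\le q$ (for a suitable $\nu\on i\in q$) whose trunk is incomparable with $\rho$, so that it forces $\dot\eta\notin[\rho]$. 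At a union node $B=\bigcup_{\alpha<\kappa}B_\alpha$, if $q\forces\dot\eta\notin B$ then $q\forces\dot\eta\notin B_\alpha$ for every $\alpha$, hence $[q]\cap B_\alpha\in\nnn_\kappa$ by the inductive hypothesis and $[q]\cap B=\bigcup_{\alpha<\kappa}([q]\cap B_\alpha)\in\nnn_\kappa$ by $\kappa^+$-completeness; here one uses that the sequence $\langle B_\alpha:\alpha<\kappa\rangle$ is read off a ground-model code, so that $B$ is evaluated coherently in $V$ and in the extension (which rests on $\tle\kappa$ being unchanged, by strategic $\kappa$-closure).

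The crux, and the step I expect to require the most care, is the intersection node $B=\bigcap_{\alpha<\kappa}B_\alpha$. From $q\forces\dot\eta\notin B$ we obtain $q\forces\exists\alpha<\kappa\,(\dot\eta\notin B_\alpha)$, so a standard density argument yields a maximal antichain $\mathcal A$ below $q$ and a function $\alpha\colon\mathcal A\to\kappa$ with $r\forces\dot\eta\notin B_{\alpha(r)}$ for each $r\in\mathcal A$. By the inductive hypothesis $[r]\cap B_{\alpha(r)}\in\nnn_\kappa$, and so $[r]\cap B\seq[r]\cap B_{\alpha(r)}\in\nnn_\kappa$, for every $r\in\mathcal A$. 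Now write $[q]\cap B=\bigl(\bigcup_{r\in\mathcal A}([r]\cap B)\bigr)\cup\bigl(([q]\cap B)\setmin\bigcup_{r\in\mathcal A}[r]\bigr)$. The first set is a union of at most $\kappa$-many members of $\nnn_\kappa$ ($|\mathcal A|\le\kappa$ again by the chain condition), hence lies in $\nnn_\kappa$. For the second, extend $\mathcal A$ to a maximal antichain $\mathcal A'$ of $\QQ_\kappa$; its new members are incompatible with $q$, so by Fact~\ref{a0} their branch sets are disjoint from $[q]$, whence $[q]\setmin\bigcup_{r\in\mathcal A}[r]\seq 2^\kappa\setmin\bigcup_{r\in\mathcal A'}[r]\in\nnn_\kappa$ directly from the definition of $\nnn_\kappa$ (a family consisting of $\kappa$ copies of $\mathcal A'$ witnesses membership). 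Hence $[q]\cap B\in\nnn_\kappa$, closing the induction. Beyond this step the remaining work is the routine but delicate bookkeeping of the $\kappa$-Borel coding — fixing a well-founded normal form, checking absoluteness of codes between $V$ and $V^{\QQ_\kappa}$, and verifying that at each node ``$q\forces\dot\eta\notin B$'' unwinds into precisely the combinatorial statement about its children that the argument consumes.
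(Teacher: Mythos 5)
The paper offers no proof of this Fact beyond the citation and the remark that it goes ``by induction on the Borel rank of $B$,'' which is exactly the route you take; your sketch --- the easy direction via $\dot\eta^{G}\in[q]$ for $q\in G$, and the hard direction via the localized equivalence $q\forces\dot\eta\notin B\Leftrightarrow[q]\cap B\in\nnn_\kappa$ with the leaf, union and intersection cases handled as you describe --- is correct. The one point to flag is that your ``induction-free'' forward direction silently transfers the disjointness of $\bigcap_{\mathcal J\in\Lambda}\bigcup_{p\in\mathcal J}[p]$ from $B$ out of $V$ into $V[G]$, where $B$ is re-evaluated from its code and $\dot\eta^{G}$ is a new point of $2^\kappa$; this step needs the generalized Mostowski ($\Sigma^1_1$-) absoluteness for $\QQ_\kappa$ that the paper invokes elsewhere (or must itself be folded into the induction on the code), rather than being free as stated.
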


	\begin{fact}
		\label{a7}	 
		 For any $p \in \QQ_\kappa$ we have $[p] \not \in \nnn$.
		 This is a simple consequence of the observation that
		 $p \forces$``$\dot \eta \in [p]$'' and Fact \ref{a1}.\qed
	\end{fact}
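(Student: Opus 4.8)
The plan is to deduce this directly from Fact \ref{a1}, expanding the one-line remark already contained in the statement. First I would note that for a condition $p$, which is a subtree of $\tle\kappa$, the $G_\delta$-closure $[p]$ coincides with the set of cofinal branches of $p$: if cofinally many $\eta\on i$ lie in the downward-closed set $p$, then all of them do. Hence $[p]$ is closed, in particular $[p] \in \Borel_\kappa$ (an open set is a union of at most $|\tle\kappa| = \kappa$ basic clopen sets, so closed sets are $\kappa$-Borel), and Fact \ref{a1} applies to $B = [p]$. It therefore suffices to show that $\forces_{\QQ_\kappa}$``$\dot\eta \notin [p]$'' \emph{fails}.

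For this I would argue that the condition $p$ itself forces $\dot\eta \in [p]$. Recall $\dot\eta = \bigcup_{q \in G}\tr(q)$. For each $\alpha < \kappa$ the set of $q \leq p$ whose trunk has length $> \alpha$ is dense below $p$: given $q \leq p$, extend $\tr(q)$ to some $\rho \in q$ of length $> \alpha$ (possible since every branch of $q$ has length $\kappa$) and pass to $q^{[\rho]} \leq q$, whose trunk is $\rho$. By genericity $G$ meets each such set, so for every $\alpha < \kappa$ there is $q \in G$ with $q \leq p$ and with $\tr(q)$ an initial segment of $\dot\eta$ of length $> \alpha$; since $q \leq p$ gives $q \seq p$, we get $\tr(q) \in p$. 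Thus $\dot\eta$ has initial segments in $p$ of unboundedly many lengths below $\kappa$, i.e. $\dot\eta \in [p]$, and this is forced by $p$.

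Combining the two observations: if $[p]$ were in $\nnn_\kappa$, then Fact \ref{a1} would give $\forces_{\QQ_\kappa}$``$\dot\eta \notin [p]$'', contradicting that the nonzero condition $p$ forces $\dot\eta \in [p]$. Hence $[p] \notin \nnn_\kappa$. I expect essentially no obstacle here — the statement is recorded as a Fact precisely because it is immediate — and the only points that deserve a word are the verification that $[p] \in \Borel_\kappa$ (so that Fact \ref{a1} may be invoked) and the unwinding of the definitions of $\dot\eta$, of the trunk, and of the $G_\delta$-closure needed to see $p \forces$``$\dot\eta \in [p]$''.
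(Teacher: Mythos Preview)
Your proposal is correct and follows exactly the approach indicated in the paper: verify $p \forces\text{``}\dot\eta \in [p]\text{''}$ and invoke Fact~\ref{a1}. The paper records this as a one-line remark, and you have simply supplied the routine details (that $[p]\in\Borel_\kappa$ and the density argument for $p \forces\text{``}\dot\eta \in [p]\text{''}$); there is no substantive difference.
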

	
	\begin{fact}
		\label{a5}
		Let $\kappa$ be weakly compact. If $A \in \nnn_\kappa$,
		then there exists a single maximal antichain $\mathcal J$ of $\QQ_\kappa$
		such that
		$$
		2^\kappa \setmin A\ \supseteq\ 
		\bigcup_{p \in \mathcal J}[p].		
		$$
		This is shown in \cite[Lemma  1.3.3., Lemma 3.1.2]{BhGoSh:1144}.
		\qed
	\end{fact}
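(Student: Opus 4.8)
The plan is to reduce the statement to a density argument in $\QQ_\kappa$. Fix $A\in\nnn_\kappa$ together with a witnessing family $\{\mathcal J_\alpha:\alpha<\kappa\}$ of maximal antichains, so that $2^\kappa\setmin A\seq\bigcap_{\alpha<\kappa}C_\alpha$, where $C_\alpha:=\bigcup_{p\in\mathcal J_\alpha}[p]$. Each $C_\alpha$ lies in $\Borel_\kappa$, and since $\mathcal J_\alpha$ is a maximal antichain the generic real is forced into it (the generic filter meets $\mathcal J_\alpha$ in a unique $p$, and $p\forces\dot\eta\in[p]$, cf.\ Fact~\ref{a7}); hence $2^\kappa\setmin C_\alpha\in\nnn_\kappa$ by Fact~\ref{a1}. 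The ideal $\nnn_\kappa$ is $\kappa^+$-complete -- this is immediate from the definition, since the union of $\kappa$ many families of $\le\kappa$ maximal antichains is again such a family -- so $C:=\bigcap_{\alpha<\kappa}C_\alpha\in\Borel_\kappa$ also has complement in $\nnn_\kappa$, and $2^\kappa\setmin A\seq C$. It therefore suffices to find a single maximal antichain $\mathcal J$ with $\bigcup_{p\in\mathcal J}[p]\seq C$, and for this it is enough to show that $D:=\{q\in\QQ_\kappa:[q]\seq C\}$ is dense: $D$ is downward closed, and any maximal antichain chosen inside a dense set is a maximal antichain of $\QQ_\kappa$.

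To see that $D$ is dense, fix $q\in\QQ_\kappa$; I would prune $q$ to a subcondition $q'\le q$ all of whose branches lie in $C$ by a fusion of length $\kappa$. At stage $\alpha$ one works below each of the $<\kappa$ many relevant nodes $\sigma$ of the current tree (those $\sigma$ with $q^{[\sigma]}\in\QQ_\kappa$): since $C_\alpha$ is co-null, the branches above $\sigma$ that miss $C_\alpha$ lie in the null set $2^\kappa\setmin C_\alpha$, so the branches above $\sigma$ landing in $C_\alpha$ still form a non-null set; in particular this set meets some $r\in\mathcal J_\alpha$, i.e.\ the current tree is compatible with $r$ above $\sigma$ (Fact~\ref{a0}), and one refines the tree above $\sigma$ so that all of its branches run through $[r]$. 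A fusion of the resulting descending $\kappa$-sequence $q=q_0\ge q_1\ge\cdots$ produces $q'$ with $[q']\seq\bigcap_{\alpha<\kappa}C_\alpha=C$ and $q'\le q$, which puts $q'$ into $D$ below $q$. Limit stages $<\kappa$ are handled by strategic $\kappa$-closure of $\QQ_\kappa$.

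The main obstacle is precisely the fusion at the final stage $\kappa$: strategic $\kappa$-closure only governs games of length $<\kappa$, so one must check by hand that $q'=\bigcap_{\alpha<\kappa}q_\alpha$ is still a legitimate condition, i.e.\ still ``fat'' at every node. This is in tension with the refinement step, because forcing the branches above a node into $[r]$ for $r\in\mathcal J_\alpha$ can a priori thin the tree at low levels; reconciling the two forces one to keep each refinement large and to make the $\kappa$ many refinements cohere, which is where weak compactness of $\kappa$ is used -- via the $\kappa^\kappa$-bounding property of $\QQ_\kappa$ and the combinatorics of \cite{Sh:1004}. Carrying this out carefully is exactly the content of \cite[Lemma~1.3.3, Lemma~3.1.2]{BhGoSh:1144}, which is the reason the statement is quoted from there rather than proved here.
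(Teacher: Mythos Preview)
The paper itself gives no proof of this fact: it simply records the statement and cites \cite[Lemma~1.3.3, Lemma~3.1.2]{BhGoSh:1144}. Your proposal is therefore not competing with a proof in the paper but rather supplying an informal outline of what the cited argument does, and in the end you defer to the same reference. In that sense the two are aligned.

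Your sketch is broadly correct in spirit: reduce to showing that $D=\{q\in\QQ_\kappa:[q]\seq C\}$ is dense, and obtain density by a length-$\kappa$ fusion in which at stage $\alpha$ one refines above each node of a front to lie inside some $[r]$ with $r\in\mathcal J_\alpha$. You are also right that the delicate point is the survival of the fusion at stage $\kappa$, which is where weak compactness (via the machinery of \cite{Sh:1004}) enters; this is exactly what the cited lemmas in \cite{BhGoSh:1144} handle.

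One genuine slip: you have the inclusion between $2^\kappa\setmin A$ and $C=\bigcap_\alpha C_\alpha$ the wrong way round. By the definition of $\nnn_\kappa$ in the paper one has $2^\kappa\setmin A\ \supseteq\ C$, not $2^\kappa\setmin A\seq C$. With the inclusion as you wrote it, finding $\mathcal J$ with $\bigcup_{p\in\mathcal J}[p]\seq C$ would not yield $\bigcup_{p\in\mathcal J}[p]\seq 2^\kappa\setmin A$. Fortunately the correct direction is precisely what the definition gives, and the rest of your argument (density of $D$, choice of a maximal antichain inside $D$) goes through unchanged once the inclusion is reversed. The side remark that $2^\kappa\setmin C\in\nnn_\kappa$ is true and harmless, but not actually needed for the reduction; the fusion works directly with the antichains $\mathcal J_\alpha$.
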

	
	\begin{thm}
		\label{a6}
		Let $\kappa$ be weakly compact.
		Let $B \in \Borel_\kappa \setmin \nnn_\kappa$.
		Then there exists $p \in \QQ_\kappa$ such that $[p] \seq B$.
		
		In words: every positive Borel set contains a random condition.
	\end{thm}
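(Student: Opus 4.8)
The plan is to reduce the statement to Fact~\ref{a5}, which over a weakly compact $\kappa$ represents every null set by the complement of the branch sets of a \emph{single} maximal antichain. Since $B \notin \nnn_\kappa$, Fact~\ref{a1} gives a condition $q \in \QQ_\kappa$ with $q \forces \dot\eta \in B$ (some condition must force $\dot\eta \in B$, else $\forces \dot\eta \notin B$). The guiding idea is that the ``error set'' $[q] \setmin B$ must then be null; feeding it into Fact~\ref{a5} and passing to a common extension of $q$ and one member of the resulting antichain produces the desired~$p$.

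So first I would check $[q] \setmin B \in \nnn_\kappa$. Being the difference of a closed set and a Borel set, $[q] \setmin B$ lies in $\Borel_\kappa$, so by Fact~\ref{a1} it suffices to verify $\forces_{\QQ_\kappa} \dot\eta \notin [q] \setmin B$, i.e.\ that no $r \in \QQ_\kappa$ forces $\dot\eta \in [q] \setmin B$. If $r \incomp q$, then $r \not\forces \dot\eta \in [q]$ by Lemma~\ref{x1}(i), so in particular $r \not\forces \dot\eta \in [q] \setmin B$. If $r \comp q$, pick a common extension $r' \leq r, q$; since $r' \leq q$ we have $r' \forces \dot\eta \in [q]$ and $r' \forces \dot\eta \in B$, hence $r' \forces \dot\eta \notin [q] \setmin B$, and again $r \not\forces \dot\eta \in [q] \setmin B$.

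Next I would apply Fact~\ref{a5} to the null set $[q] \setmin B$, obtaining a single maximal antichain $\mathcal J$ of $\QQ_\kappa$ with $\bigcup_{p \in \mathcal J}[p] \seq 2^\kappa \setmin ([q] \setmin B)$. By maximality of $\mathcal J$ there is $p_0 \in \mathcal J$ compatible with $q$; fix a common extension $p \leq q, p_0$. Using that $p \leq q$ implies $[p] \seq [q]$ (and likewise $[p] \seq [p_0]$), we obtain
\[
[p] \;\seq\; [q] \cap [p_0] \;\seq\; [q] \cap \bigl( 2^\kappa \setmin ([q] \setmin B) \bigr) \;=\; [q] \cap B \;\seq\; B,
\]
so $p$ is as required.

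The argument is short, and its only substantial ingredient is Fact~\ref{a5}; that is exactly where weak compactness of $\kappa$ enters. Without the reduction to one maximal antichain, $[q] \setmin B$ would merely be covered by the complement of a $\kappa$-indexed intersection of antichains, and there would be no single condition $p$ to return — so I expect that step, rather than the bookkeeping around Lemma~\ref{x1} and Fact~\ref{a1}, to be the crux.
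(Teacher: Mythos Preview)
Your proof is correct and follows essentially the same route as the paper: obtain $q$ with $q \forces \dot\eta \in B$ via Fact~\ref{a1}, use Lemma~\ref{x1}(i) together with the choice of $q$ to see that $[q]\setmin B \in \nnn_\kappa$, then apply Fact~\ref{a5} and pass to a common extension with a compatible member of the resulting antichain. The only cosmetic difference is that the paper presents the nullity of $[q]\setmin B$ as a case split (deriving a contradiction in the non-null case), whereas you establish it directly.
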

	
	\begin{proof}
		By Fact~\ref{a1} there exists $q$ such that $q \forces$``$\dot \eta \in B$''.
		Consider $[q] \setmin B$. There are two cases:
		\begin{enumerate}[~~~~(1)]
			\item
			$[q] \setmin B \in \nnn_\kappa$. By Fact~\ref{a5}
			there exists a single maximal antichain $\mathcal J \seq \QQ_\kappa$
			such that
			$$
			\bigcup_{r \in \mathcal J} [r]\ \cap\ ([q] \setmin B) = \emptyset.
			$$
			Choose $r \in \mathcal J$ compatible with $q$.
			Then $p = r \land q$ is as required.
			\item
			$[q] \setmin B \not \in \nnn_\kappa$.
			By Fact~\ref{a1} there exists $r \in \QQ_\kappa$ such that
			$r \forces \dot \eta \in [q] \setmin B.$
			So in particular
			\begin{enumerate}[(a)]
				\item 
				$r \forces \dot \eta \in [q]$.
				\item
				$r \forces \dot \eta \not \in B$.
			\end{enumerate}
			By (a) and Lemma~\ref{x1}(i) we have
			$r \not \incomp q$. But by our choice of $q$ we have
			 $q \forces$``$\dot \eta \in B$'', hence by (b)
			$q$ and $r$ cannot be compatible. Contradiction, i.e. this case does not appear.\qedhere
		\end{enumerate}
	\end{proof}
	
	\section{Results}
	
	Any maximal almost disjoint family $\mathcal A$ canonically defines
	the ideal $\mathcal I(\mathcal A)$ of all subsets of $\kappa$ that can be
	covered by ${<}\kappa$-many elements of $\mathcal A$.
	%\footnote{is finitely many
	%	more canonical? it's enough for our purposes I think}
	Let $\PP$ be a forcing notion.
	We say $\mathcal I(\mathcal A)$ is $\PP$-indestructible if
	$\PP$ does not add a pseudo-intersection to the dual filter of $\mathcal I(\mathcal A)$. Easily
	%\begin{center}
		$\mathcal A$ is $\PP$-indestructible iff
		$\mathcal I(\mathcal A)$ is $\PP$-indestructible.
	%\end{center}

	\begin{thm}
		\label{b1}
		Let $\kappa$ be a weakly compact cardinal.
		Let $\mathcal A \seq [\kappa]^\kappa$ be a maximal almost disjoint family and let $\mathcal I = \mathcal I(\mathcal A)$.
		The following are equivalent:
		%\footnote{Need every ${<}\kappa$ set contained in $\mathcal I$ (Or consider $f: B \to \bigcup \mathcal A$).
		%	But don't see why we need tall (but $\mathcal I$ is tall of course).}
		\begin{enumerate}[(i)]
			\item $\mathcal I$ is $\QQ_\kappa$-indestructible.
			\item
			$(\forall B \seq \tle \kappa, [B] \not \in \nnn_\kappa)(\forall f : B \rightarrow \kappa)(\exists I \in \mathcal I)\ [f^{-1}[I] ]\not \in \nnn_\kappa$.
			\item
			$(\forall B \seq \tle \kappa, [B] \not \in \nnn_\kappa)(\forall f : B \rightarrow \kappa,
			f \text{ is } {<}\kappa\text{-to-one})(\exists I \in \mathcal I)$\\ $[f^{-1}[I] ]\not \in \nnn_\kappa$.
		\end{enumerate}

	\end{thm}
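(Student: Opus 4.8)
My plan is to establish the cycle (i) $\Rightarrow$ (ii) $\Rightarrow$ (iii) $\Rightarrow$ (i). The middle implication is free, as a ${<}\kappa$-to-one $f$ is a special case of an arbitrary $f:B\to\kappa$. For the normalization I would first note that one may assume $\bigcup\mathcal A=\kappa$ --- so that $\mathcal I$ contains every subset of $\kappa$ of size ${<}\kappa$ --- since $\kappa\setmin\bigcup\mathcal A$ has size ${<}\kappa$ by maximality and one reduces to this case by adding it to every member of $\mathcal A$, which changes neither the almost disjointness nor the maximality nor the $\QQ_\kappa$-indestructibility. Throughout, the bridge between the combinatorics and the forcing is that $[B]\notin\nnn_\kappa$ lets me (Theorem~\ref{a6}) extract a condition $p$ with $[p]\seq[B]$, while conversely Fact~\ref{a1} turns ``$q\forces\dot\eta\in[C]$'' into ``$[C]\notin\nnn_\kappa$''.

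For (i) $\Rightarrow$ (ii) I argue by contraposition. Given $B$ with $[B]\notin\nnn_\kappa$ and $f:B\to\kappa$ with $[f^{-1}[I]]\in\nnn_\kappa$ for all $I\in\mathcal I$, I pick $p\in\QQ_\kappa$ with $[p]\seq[B]$ and, shrinking $B$ to $B\cap p$ and $f$ accordingly, assume $B\seq p$ (so $[B]=[p]$). The destroying set is the name $\dot X$ for $\{f(\dot\eta\on i):i<\kappa,\ \dot\eta\on i\in B\}$, i.e.\ the values $f$ reads off along the generic branch. Then $p\forces$``$|\dot X\cap I|<\kappa$ for all $I\in\mathcal I$'': otherwise a stronger condition forces cofinally many branch nodes into $f^{-1}[I]$ (distinct values of $\dot X\cap I$ come from distinct, hence cofinal, branch nodes), hence forces $\dot\eta\in[f^{-1}[I]]$, contradicting $[f^{-1}[I]]\in\nnn_\kappa$. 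And $p\forces|\dot X|=\kappa$: if not, then since $\QQ_\kappa$ is strategically $\kappa$-closed it adds no new ${<}\kappa$-sequences, so some $q\le p$ forces $\dot X$ equal to a ground model set $X_0$ of size ${<}\kappa$; but $X_0\in\mathcal I$ by the normalization and $q$ forces cofinally many branch nodes into $f^{-1}[X_0]$, so $q\forces\dot\eta\in[f^{-1}[X_0]]$ and $[f^{-1}[X_0]]\notin\nnn_\kappa$ --- a contradiction. Hence $p$ forces $\dot X$ to be an unbounded subset of $\kappa$ almost disjoint from every member of $\mathcal A$, so $\mathcal I$ is destructible.

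For (iii) $\Rightarrow$ (i), again by contraposition: let $p_0$ force that $\dot X$ is unbounded in $\kappa$ with $|\dot X\cap I|<\kappa$ for all $I\in\mathcal I$, and let $\langle\dot x_\alpha:\alpha<\kappa\rangle$ name the increasing enumeration of $\dot X$, so $p_0\forces\alpha\le\dot x_\alpha$. By a fusion argument for $\QQ_\kappa$ (continuous reading of names), augmented by $\kappa^\kappa$-boundingness to make the reading levels grow with $\alpha$, I obtain $q\le p_0$ and a strictly increasing $g:\kappa\to\kappa$ such that $q^{[\rho]}$ decides $\dot x_\alpha$ for every $\rho\in q$ with $\lh(\rho)=g(\alpha)$; call this value $h_\alpha(\rho)$. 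Put $B=\bigcup_{\alpha<\kappa}\{\rho\in q:\lh(\rho)=g(\alpha)\}$ and $f(\rho)=h_\alpha(\rho)$ when $\lh(\rho)=g(\alpha)$. Then $[B]=[q]\notin\nnn_\kappa$ (Fact~\ref{a7}), and $q\forces\dot X=\{f(\dot\eta\on i):i<\kappa,\ \dot\eta\on i\in B\}$. Crucially $f$ is ${<}\kappa$-to-one: if $f(\rho)=\xi$ with $\lh(\rho)=g(\alpha)$ then $q^{[\rho]}\forces\alpha\le\dot x_\alpha=\xi$, so $\alpha\le\xi$ and $f^{-1}[\{\xi\}]$ lies in the union of the ${<}\kappa$-many levels $g(\alpha)$, $\alpha\le\xi$, of the subtree $q$ --- each of size ${<}\kappa$ since $\kappa$ is inaccessible. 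Finally $[f^{-1}[I]]\in\nnn_\kappa$ for all $I\in\mathcal I$: otherwise Theorem~\ref{a6} gives $r$ with $[r]\seq[f^{-1}[I]]\seq[q]$, so $r\le q$ by Lemma~\ref{x1}(ii), and $r$ forces cofinally many branch nodes into $f^{-1}[I]$; as $f$ is ${<}\kappa$-to-one the corresponding $f$-values form a size-$\kappa$ subset of $\dot X\cap I$, contradicting $p_0\forces|\dot X\cap I|<\kappa$. So $B,f$ refute (iii).

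The step I expect to be the real work is the fusion/continuous-reading of names for $\QQ_\kappa$ used in (iii) $\Rightarrow$ (i): one must show that any name for an ordinal can, below a suitable stronger condition, be decided uniformly at a single level of the tree, and that $\kappa$-many such decisions amalgamate into one condition with the decision levels chosen increasing. This is exactly where weak compactness enters (through $\kappa^\kappa$-boundingness of $\QQ_\kappa$) and should be available from the basic machinery of \cite{Sh:1004}; granting it, the remaining ingredients --- the ${<}\kappa$-to-one count, the treatment of bounded sets in $\mathcal I$, and the two absoluteness facts --- are routine bookkeeping with $G_\delta$-closures.
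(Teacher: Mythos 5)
Your proof is correct and follows essentially the same route as the paper's: contraposition in both nontrivial directions, reading the destroying set off the generic branch via $f$ for (i)$\Rightarrow$(ii), and using continuous reading of names (the fusion of \cite[Claim 1.9]{Sh:1004}) together with Theorem~\ref{a6} and Lemma~\ref{x1}(ii) for (iii)$\Rightarrow$(i). The only cosmetic differences are that you phrase (i)$\Rightarrow$(ii) with names and strategic closure rather than arguing in the generic extension, and that you spell out the ${<}\kappa$-to-one count via inaccessibility, which the paper leaves implicit.
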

	
	\begin{proof}
		\underline{(i) $\Rightarrow$ (ii)}:
		Assume (ii) fails, i.e. there exist $B \seq \tle \kappa$,
		$[B] \not \in \nnn_\kappa$ and
		$f : B \to \kappa$ such that $[f^{-1}[I]] \in \nnn_\kappa$  for all $I \in \mathcal I$.
		By Theorem~\ref{a6} there exists $p \in \QQ_\kappa$ such that $[p] \seq [B]$.
		Let $G$ be a $\QQ_\kappa$-generic filter containing $p$ and let
		$\eta = \bigcup_{q \in G} \tr(q) \in 2^\kappa$, hence by Fact~\ref{a1} 
		we have $\eta \not \in [f^{-1}[I]]$ for all $I \in \mathcal I$.		
		Consider 
		$$A = \{f(\eta \on i) : i < \kappa, \eta \on i \in B \}.$$ First note that
		because $\eta \in [f^{-1}[A]]$ we have $A \not \in \mathcal I$.
		Without loss of generality $\bigcup \mathcal A = \kappa$, hence all sets of size less than $\kappa$ are contained in $\mathcal I$, which implies $|A| = \kappa$.
		
		Now check that $A$ destroys $\mathcal I$.
		Assume it does not, i.e. there exists $I \in \mathcal I$
		such that $|I \cap A| = \kappa$. This implies
		$\eta \on i \in f^{-1}[I]$ for cofinally many $i < \kappa$, hence
		$\eta \in [f^{-1}[I]]$. Contradiction, thus $A$ is almost disjoint from
		all $I \in \mathcal I$, i.e. $p \forces $``$\mathcal I$ is destroyed''.
		So we have shown that $\lnot$(ii) implies $\lnot$(i).
		
		\underline{(ii) $\Rightarrow$ (iii)}: Trivial.

		\underline{(iii) $\Rightarrow$ (i)}:
		Towards contradiction assume there is $p \in \QQ_\kappa$
		and a $\QQ_\kappa$-name $\dot x$ such that
		$$p \forces\text{``}\dot x \in [\kappa]^{\kappa}\text{'' and }
		(\forall I \in \mathcal I)\ 
		p \forces\text{``}|\dot x \cap I| < \kappa\text{''}.$$
		Furthermore let $p$ be a fusion condition
		as in \cite[Claim 1.9.]{Sh:1004}, i.e. such that
		there exists a cofinal sequence $\langle \beta_i : i < \kappa \rangle$ such that for all $i < \kappa$, $\rho \in 2^{\beta_i} \cap p$ the condition
		$p^{[\rho]}$ decides $\dot x_i$, where $\langle \dot x_i : i < \kappa \rangle$
		is an increasing enumeration of $\dot x$.
		
		Let $B = p \cap \bigcup_{i < \kappa} 2^{\beta_i}$
		and clearly $[B] = [p]$, hence $B \not \in \nnn_\kappa$ by Fact~\ref{a7}.
		Define $f : B \to \kappa$
		such that for $\rho \in B \cap 2^{\beta_i}$ we have
		$$f(\rho) = 
		\alpha_\rho \ \text{ such that }\ 
		p^{[\rho]} \forces \text{``}\dot x_i = \alpha_\rho \text{''}.
		$$
		Is is easy to see that $f$ is ${<}\kappa$-to-one since our choice of $p$
		implies $\rho \in B \cap 2^{\beta_i} \Rightarrow f(\rho) \geq i.$
			
		By our assumption there exists $I \in \mathcal I$ such that
		$[f^{-1}[I]] \not \in \nnn_\kappa$, hence by Theorem~\ref{a6} there exists
		$q \in \QQ_\kappa$ such that
		$[q] \seq [f^{-1}[I]]$. Of course $[f^{-1}[I]] \seq [p]$,
		hence $[q] \seq [p]$, and by Lemma~\ref{x1}(ii) this implies
		$q \leq p$.
		
		But $q \forces$``$|\dot x \cap I| = \kappa$''. Contradiction.		
	\end{proof}
	
	Note that the proof of Theorem \ref{b1} essentially verifies that
	$\QQ_\kappa$ satisfies a $\kappa$-version of {\em weak fusion} as defined in \cite[Definition 2.2.1]{BY:2005} (except there a one-to-one function is required).
	However, as the definition of weak fusion is fairly technical, doing the proof
	directly may be more transparent.
	
	\begin{thm}
		\label{b2}
		Let $\kappa$ be a weakly compact cardinal. If $\add(\nnn_\kappa) = \mathfrak b_\kappa = \mathfrak c_\kappa$,
		then there exists a $\QQ_\kappa$-indestructible maximal almost disjoint family
		$\mathcal A \seq [\kappa]^\kappa$ of size $\mathfrak c_\kappa$.
	\end{thm}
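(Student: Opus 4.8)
The plan is to build, by transfinite recursion of length $\mathfrak c_\kappa$ (a regular cardinal, being $\add(\nnn_\kappa)$), an increasing chain $\langle\mathcal A_\alpha:\alpha\le\mathfrak c_\kappa\rangle$ of almost disjoint families with $\bigcup\mathcal A_0=\kappa$ and $|\mathcal A_\alpha|<\mathfrak c_\kappa$ for $\alpha<\mathfrak c_\kappa$, so that the final family $\mathcal A=\mathcal A_{\mathfrak c_\kappa}$ satisfies clause (iii) of Theorem~\ref{b1}. Then extend $\mathcal A$ to a maximal almost disjoint family $\mathcal A'\supseteq\mathcal A$ by the Teichm\"uller-Tukey lemma. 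Since $\mathcal I(\mathcal A)\seq\mathcal I(\mathcal A')$, any $I$ witnessing (iii) for $\mathcal A$ still witnesses it for $\mathcal A'$, so $\mathcal A'$ also satisfies (iii), and by Theorem~\ref{b1} the ideal $\mathcal I(\mathcal A')$ — hence $\mathcal A'$ itself — is $\QQ_\kappa$-indestructible. As $|\mathcal A'|\le|[\kappa]^\kappa|=2^\kappa=\mathfrak c_\kappa$, and a routine bookkeeping (adjoining one further almost disjoint set at each stage) forces $|\mathcal A|=\mathfrak c_\kappa$, we get $|\mathcal A'|=\mathfrak c_\kappa$.

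For the recursion fix an enumeration $\langle(B_\alpha,f_\alpha):\alpha<\mathfrak c_\kappa\rangle$ of all pairs $(B,f)$ with $B\seq\tle\kappa$, $[B]\notin\nnn_\kappa$, and $f:B\to\kappa$ a ${<}\kappa$-to-one function; there are exactly $2^\kappa=\mathfrak c_\kappa$ of these since $|\tle\kappa|=\kappa$ and $\kappa^\kappa=2^\kappa$. At stage $\alpha$: if some $I\in\mathcal I(\mathcal A_\alpha)$ already has $[f_\alpha^{-1}[I]]\notin\nnn_\kappa$, set $\mathcal A_{\alpha+1}=\mathcal A_\alpha$ (plus the bookkeeping set); otherwise find $A^*\in[\kappa]^\kappa$ with $\mathcal A_\alpha\cup\{A^*\}$ almost disjoint and $[f_\alpha^{-1}[A^*]]\notin\nnn_\kappa$, and set $\mathcal A_{\alpha+1}=\mathcal A_\alpha\cup\{A^*\}$. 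In the second case $I=A^*$ witnesses (iii) for $(B_\alpha,f_\alpha)$, and since $\mathcal I$ only grows this survives to $\mathcal A$; taking unions at limits, $\mathcal A$ satisfies (iii). So everything reduces to the following \emph{amalgamation step}: given an almost disjoint family $\mathcal A_\alpha$ with $\bigcup\mathcal A_\alpha=\kappa$ and $|\mathcal A_\alpha|<\add(\nnn_\kappa)$, and $(B,f)$ as above with $[f^{-1}[A]]\in\nnn_\kappa$ for every $A\in\mathcal A_\alpha$, produce such an $A^*$. (Using that $[f^{-1}[\bigcup_{j<\mu}A_j]]=\bigcup_{j<\mu}[f^{-1}[A_j]]$ when $\mu<\kappa=\cf\kappa$, this last hypothesis is precisely the failure of the first case.)

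I would prove the amalgamation step as follows. The set $\bigcup_{A\in\mathcal A_\alpha}[f^{-1}[A]]$ is a union of $|\mathcal A_\alpha|<\add(\nnn_\kappa)$ members of $\nnn_\kappa$, so it lies in $\nnn_\kappa$; hence $Z:=[B]\setmin\bigcup_{A\in\mathcal A_\alpha}[f^{-1}[A]]\notin\nnn_\kappa$. By Fact~\ref{a5} the removed set is contained in a Borel set in $\nnn_\kappa$, so $Z$ contains a \emph{Borel} non-null set, and Theorem~\ref{a6} provides $p\in\QQ_\kappa$ with $[p]\seq Z$. For $\eta\in[p]$ put $S_\eta:=\{f(\eta\on i):i<\kappa,\ \eta\on i\in B\}$. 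Because $\eta\in Z\seq[B]$, the set $\{i:\eta\on i\in B\}$ is cofinal in $\kappa$ while, for each $A\in\mathcal A_\alpha$, $\{i:\eta\on i\in B,\ f(\eta\on i)\in A\}$ is bounded; thus $|S_\eta\cap A|<\kappa$ for all $A\in\mathcal A_\alpha$, and $|S_\eta|=\kappa$ (else $S_\eta\in\mathcal I(\mathcal A_\alpha)$ while $\eta\in[f^{-1}[S_\eta]]$, contradicting $\eta\in Z$). So $\mathcal A_\alpha\cup\{S_\eta\}$ is almost disjoint for every branch $\eta\in[p]$; it remains to replace the branch-dependent $S_\eta$ by a single $A^*$ that works for a non-null set of branches.

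To build $A^*$, work below $p$: the name $\dot e$ for the increasing enumeration of $S_{\dot\eta}$ names an element of $\kappa^\kappa$, so by $\kappa^\kappa$-boundedness of $\QQ_\kappa$ we may pass to $p_1\le p$ and fix $g\in\kappa^\kappa$ with $p_1\forces\dot e\le g$ pointwise. Now run a fusion of length $\kappa$ below $p_1$ (using strategic $\kappa$-closure, so it has a limit $q\in\QQ_\kappa$; cf. \cite[Claim 1.9]{Sh:1004}), producing $A^*=\bigcup_{\xi<\kappa}a_\xi$ with $|a_\xi|<\kappa$: at step $\xi$, for every node $\rho$ of the current condition at the relevant level choose $\sigma_\rho\trianglerighteq\rho$ in $B$ inside the condition — possible because all branches of $[p]\seq[B]$ meet $B$ cofinally — and put $f(\sigma_\rho)$ into $a_\xi$; this forces $q\forces\dot\eta\in[f^{-1}[A^*]]$, whence $[f^{-1}[A^*]]\notin\nnn_\kappa$ by Fact~\ref{a1}. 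Simultaneously, using $g$ and — crucially — a single function dominating a suitable family of functions coding the sets $A_\beta\in\mathcal A_\alpha$ (available since $|\mathcal A_\alpha|<\mathfrak c_\kappa=\mathfrak b_\kappa$), one schedules the levels and the $a_\xi$ so that $|A^*\cap A_\beta|<\kappa$ for every $A_\beta\in\mathcal A_\alpha$, i.e.\ $\mathcal A_\alpha\cup\{A^*\}$ is almost disjoint. This last requirement is the main obstacle: making the \emph{single} set $A^*$ simultaneously almost disjoint from all (up to $\mathfrak b_\kappa$ many) members of $\mathcal A_\alpha$ and cofinal in $S_\eta$ for non-null-many $\eta$, while keeping every condition along the fusion in $\QQ_\kappa$ (retaining enough branching); it is exactly here that $\mathfrak b_\kappa=\mathfrak c_\kappa$ is used — classically this is free from $\add(\nnn)=\mathfrak c$ — and the argument parallels the weak-fusion construction of \cite[Definition 2.2.1, Theorem 3.6.1]{BY:2005}. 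Everything else is bookkeeping or a direct appeal to the facts of Section~3.
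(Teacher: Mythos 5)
Your overall architecture coincides with the paper's: recurse over an enumeration of all pairs $(B,f)$ of length $\mathfrak c_\kappa$, reduce to one amalgamation step, use $|\mathcal A_\alpha|<\add(\nnn_\kappa)$ to make $\bigcup_{A\in\mathcal A_\alpha}[f^{-1}[A]]$ null, and use Fact~\ref{a5} and Theorem~\ref{a6} to extract a condition $q$ with $[q]\seq[B]$ and $[q]$ disjoint from that union; the branchwise analysis of $S_\eta$ is also correct. The gap is exactly at the point you yourself flag as ``the main obstacle'': producing a single $A^*$. You do not prove this step, you defer it to an unspecified fusion in which ``one schedules the levels and the $a_\xi$'' against a dominating function for ``a suitable family of functions coding the sets $A_\beta$''. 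Two things go wrong. First, selecting one node $\sigma_\rho\trianglerighteq\rho$ in $B$ above each node $\rho$ of the current condition and putting $f(\sigma_\rho)$ into $A^*$ does not yield $q\forces\dot\eta\in[f^{-1}[A^*]]$: the generic branch through $\rho$ need not pass through $\sigma_\rho$, so to force cofinal membership you would have to prune below a front of selected nodes at $\kappa$ many stages while certifying that the result stays in $\QQ_\kappa$ --- none of which is addressed. Second, and more seriously, you never say which functions coding the $A_\beta$ are to be dominated or why dominating them makes the chosen values eventually avoid each $A_\beta$; the sets $A_\beta$ are unbounded in $\kappa$, and a bound on the increasing enumeration of $S_{\dot\eta}$ (your use of $\kappa^\kappa$-bounding) gives no control at all over whether $f(\sigma_\rho)\in A_\beta$.

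The device that actually closes this gap (in the paper, following Hru\v{s}\'ak) is a dichotomy you don't have. Enumerate $q\cap B$ as $\langle\rho_i:i<\kappa\rangle$ and try to choose \emph{distinct} $\epsilon_i<\zeta$ with $|f^{-1}[A_{\epsilon_i}]\cap q^{[\rho_i]}|=\kappa$. If this fails at some $i$, then $f[q^{[\rho_i]}\setmin\bigcup_{j<i}f^{-1}[A_{\epsilon_j}]]$ is already almost disjoint from everything and has non-null preimage closure by Fact~\ref{a7}. If it succeeds, the new set is built \emph{from elements of the old sets}: $A^*=\{k_i:i<\kappa\}$ with $k_i\in A_{\epsilon_i}$ a value attained by $f$ on $q^{[\rho_i]}$, $k_i\notin\bigcup_{j<i}A_{\epsilon_j}$, and $k_i>g(i)$ where $g$ dominates the functions $g_\epsilon(i)=\sup(A_\epsilon\cap A_{\epsilon_i})$ --- these are ${<}\kappa$-valued precisely because the $A_\epsilon$ are already pairwise almost disjoint. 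Almost-disjointness of $A^*$ from an arbitrary $A_\epsilon$ then reduces to domination of the $g_\epsilon$, and this is where $\zeta<\mathfrak b_\kappa=\mathfrak c_\kappa$ is genuinely used (not to bound the enumeration of $S_{\dot\eta}$). Without this ``take $k_i$ inside $A_{\epsilon_i}$'' idea, or a worked-out substitute, your amalgamation step is incomplete, and it is the heart of the theorem.
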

	
	\begin{proof}
		Let $\langle (B_\zeta, f_\zeta) : \kappa \leq \zeta < \mathfrak c_\kappa \rangle$
		enumerate all pairs $(B, f)$ where $B \seq \tle \kappa$, $[B] \not \in \nnn_\kappa$ and
		$f : B \to \kappa$ is a ${<}\kappa$-to-one function.
		Let $\langle A_\zeta : \zeta < \kappa \rangle$ be a partition of $\kappa$
		into sets of size $\kappa$.
		We are inductively going to construct sequence $\langle A_\zeta : \kappa \leq \zeta < \mathfrak c_\kappa \rangle$ such that for all $\zeta \in [\kappa, \mathfrak c_\kappa)$:
		\begin{enumerate}[~~~~(1)]
			\item $A_\zeta \in [\kappa]^\kappa$.
			\item 
			$(\forall \epsilon < \zeta)\ |A_\zeta \cap A_\epsilon| < \kappa$.
			\item 
			$(\exists \epsilon \leq \zeta)\ [f_\zeta^{-1}[A_\epsilon]] \not \in \nnn_\kappa$.
		\end{enumerate}	
		If we can carry out this construction, we may find a maximal almost disjoint family
		$\mathcal A  \supseteq \{A_\zeta : \zeta < \mathfrak c_\kappa\}$ using the Teichm\"uller-Tukey lemma, and $\mathcal A$ is $\QQ_\kappa$-indestructible by Theorem \ref{b1}.
	
		At stage $\zeta$ consider $f_\zeta : p_\zeta \to \kappa$.
		
		\underline{Case 1:}
		There exists $\epsilon < \zeta$ such that
		$[f_\zeta^{-1}[A_\epsilon]] \not \in \nnn_\kappa$. 
		In this case let $A_\zeta$ be any set satisfying (1) and (2).
		Remember $\zeta < \mathfrak c_\kappa = \mathfrak b_\kappa \leq \mathfrak a_\kappa$ so this is
		always possible.
		
		\underline{Case 2:}
		For all $\epsilon < \zeta$ we have
		$[f_\zeta^{-1}[A_\epsilon]] \in \nnn_\kappa$.
		By Theorem~\ref{a6} there exists $p_\zeta \in \QQ_\kappa$
		such that $[p_\zeta] \seq [B_\zeta]$.
		By our assumption $\zeta < \add(\nnn_\kappa)$, hence also
		$$X = \bigcup_{\epsilon < \zeta} [f_\zeta^{-1}[A_\epsilon]] \in \nnn_\kappa.$$
		By Fact~\ref{a5} there exists a maximal antichain $\mathcal J$ of $\QQ_\kappa$
		such that
		$$
		X\ \cap\ 
		\bigcup_{p \in \mathcal J}[p]\ =\ \emptyset.
		$$		
		Let $p \in \mathcal J$ be such that $p \not \incomp p_\zeta$
		and let $q = p \cap p_\zeta$. Clearly $X \cap [q] = \emptyset$.
		
		Now the plan is as follows: $f_\zeta[q \cap B_\zeta]$ is a candidate
		for $A_\zeta$ satisfying (1) and (3). So we want to thin out  $f_\zeta[q \cap B_\zeta]$ to some $A_\zeta \seq f_\zeta[q \cap B]$ satisfying (2) and still
		satisfying (1) and (3).		
		We use a combinatorial argument from
		\cite{H:2001} to finish the proof.
		
		Let $\langle \rho_i : i < \kappa \rangle$ enumerate $q \cap B_\zeta$.
		For $i < \kappa$ inductively try to choose distinct $\epsilon_i < \zeta$
		such that $$|f_\zeta^{-1}[A_{\epsilon_i}] \cap q^{[\rho_i]}| = \kappa.$$
		If this construction fails at stage $i < \kappa$ note that
		$$[\bigcup_{j < i} f_\zeta^{-1}[A_{\epsilon_j}] \cap q^{[\rho_i]}] = \emptyset$$
		hence $$[q^{[\rho_i]} \setmin \bigcup_{j < i} f_\zeta^{-1}[A_{\epsilon_j}]]
		= [q^{[\rho_i]}]$$ and easily $$A_\zeta = f[q^{[\rho_i]} \setmin \bigcup_{j < i} f_\zeta^{-1}[A_{\epsilon_j}]]$$ is as required,
		i.e. $A_\zeta$ is almost disjoint from $A_\epsilon$ for all $\epsilon < \zeta$
		and $[f^{-1}_\zeta[A_\zeta]] \supseteq [q^{[\rho_i]}]$, hence
		$[f^{-1}_\zeta[A_\zeta]] \not \in \nnn_\kappa$ by Fact~\ref{a7}.
		
		So assume the construction succeeded and for $\epsilon \in \zeta \setmin \{\epsilon_i : i < \kappa \}$ define $g_\epsilon : \kappa \to \kappa$ by
		$$
		g_\epsilon(i) = \sup(A_\epsilon \cap A_{\epsilon_i}).
		$$
		Remember $\zeta < \mathfrak c_\kappa = \mathfrak b_\kappa$ and
		find $g \in \kappa^\kappa$ such that $g_\epsilon \leq^* g$
		for all $\epsilon$.
		Now for every $i < \kappa$ choose
		$$
		k_i \in \{
		m \in A_{\epsilon_i} : f_\zeta^{-1}[\{m\}] \cap q \cap \{\varrho : \rho_i \trianglelefteq \varrho\} \neq \emptyset
		\ \land\ m > g(i)
		\} \setmin \bigcup_{j < i} A_{\epsilon_j}.
		$$
		Let $A_\zeta = \{k_i : i < \kappa \}$. By construction $A_\zeta$
		is almost disjoint from $A_\epsilon$ for all $\epsilon < \zeta$ and
		$[f_\zeta^{-1}[A_\zeta]] \supseteq [q]$, hence
		$[f_\zeta^{-1}[A_\zeta]] \not \in \nnn_\kappa$ by Fact~\ref{a7}.
	\end{proof}
	
	\section*{Acknowledgements}	
	I thank Martin Goldstern, who read this manuscript and
	provided valuable comments, suggestions and corrections. 
	
	I thank Vera Fischer, who during the defense of my thesis asked the question of the existence of a higher random indestructible maximal almost disjoint family,
	and who thus inspired this paper.
	
	\bibliography{ours}
	\bibliographystyle{chicago}
	
\end{document}